\documentclass[12pt]{amsart}
\usepackage{amssymb,latexsym}
\usepackage{enumerate}
\usepackage{hyperref}
\usepackage{fullpage}
\usepackage{tikz}

\usepackage{fixme}
\fxsetup{
    status=draft,
    author=,
    layout=margin,
    theme=color
}

\makeatletter \@namedef{subjclassname@2010}{%
  \textup{2010} Mathematics Subject Classification}
\makeatother

\newcounter{thm} \numberwithin{thm}{section}
\newtheorem{Theorem}[thm]{Theorem}

\newtheorem{Lemma}[thm]{Lemma}

\newtheorem*{Definition}{Definition}

\newtheorem*{Claim}{Claim}





\newcommand{\R}{\mathbb{R}}


\author[O. Roche-Newton]{Oliver Roche-Newton} \address{Johann Radon Institute for Computational and Applied Mathematics\\
Linz, Austria}
\email{o.rochenewton@gmail.com}

\date{}

\begin{document}

\baselineskip=17pt

\title{Sums, products and dilates on sparse graphs}

\date{}
\maketitle

\begin{abstract}
Let $A \subset \mathbb R$ and $G \subset A \times A$. We prove that, for any $\lambda \in \mathbb R \setminus \{-1,0,1\}$,
\[
\max \{|A+_G A|, |A+_G \lambda A|, |A\cdot_G A|\} \gg |G|^{6/11}.
\]
\end{abstract}

\section{Introduction}
Given a finite set $A \subset \mathbb R$ and a set $G \subset A \times A$, the sum set of $A$ restricted to $G$ is the set
\[
A+_G A:= \{ a+b : (a,b)\in G \}.
\]
The restricted product set $A\cdot_G A= \{ab: (a,b) \in G\}$ is defined similarly. One may consider the question of finding lower bounds for the quantity
\[
\max \{|A+_G A|, |A\cdot_G  A|\}.
\]
The case when $G=A \times A$ corresponds to the classical sum-product problem over $\mathbb R$, where the current state of the art, due to Rudnev and Stevens \cite{RS} is that the bound\footnote{Here and throughout this paper, the notation
 $X\gg Y$, $Y \ll X,$ $X=\Omega(Y)$, and $Y=O(X)$ are all equivalent and mean that $X\geq cY$ for some absolute constant $c>0$. $X \approx Y$ and $X=\Theta (Y)$ denote that both $X \gg Y$ and $X \ll Y$ hold. $X \gg_a Y$ means that the implied constant is no longer absolute, but depends on $a$. We also use the notation $X \gtrsim Y $ and $Y \lesssim X$ to denote that $X \gg Y/(\log_2 Y)^c$ for some absolute constant $c>0$.}
\[
\max \{|A+ A|, |A\cdot A|\} \gg |A|^{\frac{4}{3}+\frac{2}{1167}-o(1)},
\]
holds for all $A \subset \mathbb R$. 

For an arbitrary $G \subset A \times A$, the bound
\begin{equation} \label{trivial}
\max \{|A+_G A|, |A\cdot_G A|\} \geq \frac{1}{\sqrt 2}|G|^{1/2}.
\end{equation}
holds by a simple argument.\footnote{
If $|A+_G A| \leq \frac{1}{\sqrt 2}|G|^{1/2}$ then by the pigeonhole principle there is some $x$ such that $|\{(a,b) \in G : a+b=x\}|=t \geq \sqrt 2|G|^{1/2}$.
 We have
$
x=a_1+b_1=\dots=a_t+b_t$, $ (a_i,b_i) \in G$
But then in the multiset of products $\{a_ib_i : 1 \leq i \leq t\}$, each element occurs with multiplicity at most 2. This implies that $|A \cdot_G A|\geq \frac{t}{2} \geq \frac{1 }{\sqrt 2}|G|^{1/2}$.
}

This trivial bound cannot be improved in general, as there exists $A \subset \mathbb R$ and $G \subset A \times A$ such that
\[
\max \{|A+_G A|, |A\cdot_G A|\} \ll |G|^{1/2}.
\]
Consider for instance the following construction of Chang \cite{Chang}:
\begin{equation} \label{construction}
A= \{ \sqrt i \pm \sqrt j  : i,j \in [n]\},\,\,\,\, G=\{(\sqrt i + \sqrt j, \sqrt i - \sqrt j) : i,j \in [n] \}.
\end{equation}
The sets $A$ and $G$ have cardinality $\Theta (n^2)$. Meanwhile,
\[
A+_G A= \{2 \sqrt i : i \in [n]\}
\]
has cardinality $n$, and
\[
A\cdot_G A= \{i-j : i,j \in [n]\}
\]
has cardinality $O(n)$. In fact, a slightly different construction shows that the bound \eqref{trivial} is completely tight, including the multiplicative constant $\frac{1}{\sqrt 2}$. See the forthcoming section \ref{sec:picture}.

The common vernacular in these restricted sum-product problems is to view the set $G$ as the edge set of a bipartite graph on $A \times A$. The graph $G$ in example \eqref{construction} is fairly small, or \textit{sparse}. Non-trivial bounds for sufficiently dense graphs are known; see, for instance, the work of Alon, Ruzsa and Solymosi \cite[Theorem 10]{ARS}. 

\subsection{Introducing a third set}

Given the trivial lower bound \eqref{trivial} and the matching construction \eqref{construction}, it seems there is not much more to say about sum-product estimates over $\mathbb R$ along arbitrary (possibly sparse) graphs. One can modify the question by considering a third set, naturally occurring in sum-product type problems. Chang \cite{Chang} considered the question of finding a lower bound for
\[
\max \{|A+_G A|, |A-_G A|, |A\cdot_G A|\}
\]
where $A-_G A:=\{a-b: (a,b) \in G \}$. Chang gave a small improvement to the trivial bound for all $A \subset \mathbb Z$, proving that $\max \{|A+_G A|, |A-_G A|, |A\cdot_G A|\} \gg_{\epsilon} |G|^{1/2} (\log |G|)^{1/48 - \epsilon}$. However, she also showed that a non-trivial bound is impossible for $A \subset \mathbb R$. Indeed, for the same example \eqref{construction} above we have $A-_G A=\{2\sqrt j : j \in [n]\}$
and thus
\begin{equation} \label{3construction}
|A+_G A|,|A-_G A|, |A \cdot_G  A| \ll |G|^{1/2}.
\end{equation}

The aim of this paper is to show that a non-trivial bound can be obtained for a small perturbation of this problem. Consider the set
\[
A+_G \lambda A:=\{a+\lambda b : (a,b) \in G\},
\]
where $\lambda$ is some non-zero real number. The case $\lambda =-1$ corresponds to the restricted difference set $A-_G A$. The main result of this note shows that this value $-1$, giving rise to the situation in \eqref{3construction}, is special, and that a non-trivial bound holds for any other $\lambda$. 
\begin{Theorem} \label{thm:main}
Let $A,B \subset \mathbb R$, $G \subset A \times B$, and $\lambda \in \mathbb R \setminus \{-1,0,1\}$. Then
\[
\max \{|A+_G B|,|A+_G \lambda B|, |A \cdot_G B| \}\gg |G|^{6/11}.
\]
\end{Theorem}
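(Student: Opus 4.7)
The plan is to reduce the theorem to a single application of the Elekes--Szab\'o theorem, in the form given by Raz, Sharir and de~Zeeuw. Write $X := A +_G B$, $Y := A +_G \lambda B$, $Z := A \cdot_G B$, and $N := \max(|X|, |Y|, |Z|)$. The algebraic observation underpinning the whole argument is that whenever $s = a + b$, $t = a + \lambda b$, and $p = ab$, the hypothesis $\lambda \neq 1$ allows us to solve $a = (\lambda s - t)/(\lambda - 1)$ and $b = (t - s)/(\lambda - 1)$, and substituting these back into $p = ab$ gives the polynomial identity
\[
F(s, t, p) := (\lambda - 1)^2 p + \lambda s^2 + t^2 - (\lambda + 1) s t = 0.
\]
Let $V \subset \RR^3$ be the quadric surface cut out by $F$. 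For each $(a, b) \in G$ the triple $(a+b, a+\lambda b, ab)$ lies on $V$ and inside $X \times Y \times Z$; distinct edges give distinct triples because $(s, t)$ already determines $(a, b)$. Hence $|G| \le |V \cap (X \times Y \times Z)|$.

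I would then invoke the Elekes--Szab\'o theorem of Raz--Sharir--de~Zeeuw, which says that for a bounded-degree irreducible polynomial $F \in \RR[s, t, p]$ in the \emph{non-degenerate} class,
\[
|V \cap (X \times Y \times Z)| \ll N^{11/6}
\]
(up to lower-order terms that are linear in the individual set sizes and are dominated by $N^{11/6}$). Combining with the previous display yields $|G| \ll N^{11/6}$, which rearranges to $N \gg |G|^{6/11}$, as desired.

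The one substantive technical step is to verify that our particular $F$ lies in the non-degenerate class, and this is where I expect the real work to be. The degenerate surfaces in Raz--Sharir--de~Zeeuw are precisely those whose defining equation can be brought by a change of variables into a group-law form such as $\phi_1(s) + \phi_2(t) + \phi_3(p) = 0$, the paradigmatic example being the multiplication surface $p = uv$. The prohibited values $\lambda \in \{-1, 0, 1\}$ match exactly the ways in which our $F$ can collapse into such a form: at $\lambda = 1$ the quadratic form becomes $(s-t)^2$ and loses its dependence on $p$; at $\lambda = 0$ the relation $t = a$ reduces the problem to two effective variables; and at $\lambda = -1$ we have $F = 4p - s^2 + t^2$, whose zero set is exactly the multiplication surface $4p = uv$ after the linear change of coordinates $u = s - t$, $v = s + t$. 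This is entirely consistent with Chang's construction \eqref{construction}, which is essentially the extremal configuration on the multiplication surface. For $\lambda \notin \{-1, 0, 1\}$ the cross-term coefficient $(\lambda + 1)/(\lambda - 1)^2$ is nonzero, and a direct computation of the Elekes--Szab\'o--Tao style mixed-partial invariant $\partial_s \partial_t \log(\partial_t F / \partial_s F)$ is nonvanishing, which is the standard obstruction certifying that $F$ is outside the exceptional list. Carrying out this non-degeneracy check is the main obstacle; once it is in hand, the $|G|^{6/11}$ bound is immediate.
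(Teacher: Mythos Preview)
Your proposal is correct and follows essentially the same route as the paper: eliminate $a,b$ from the three defining relations to obtain the same quadric $F(s,t,p)=(\lambda-1)^2p+\lambda s^2+t^2-(\lambda+1)st$, observe that $|G|\le |Z(F)\cap(X\times Y\times Z)|$, and apply the Raz--Sharir--de~Zeeuw bound after verifying non-degeneracy via the mixed-partial test $\partial_s\partial_t\log|f_s/f_t|$ on $p=f(s,t)$. The paper carries out this derivative computation explicitly and shows it equals a nonzero multiple of $(t^2-\lambda s^2)$ divided by a nonvanishing denominator, which is precisely the verification you flag as the remaining obstacle.
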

Our primary interest is in the case when $A=B$, but note that the result also holds in the asymmetric case when sums and products of two different sets are considered. Furthermore, the result can be extended to the setting of $ \mathbb C$ by instead using a complex analogue of the forthcoming Theorem \ref{11/6ES}.

\subsection{Integers versus reals} \label{sec:picture}
The fact that a non-trivial bound holds for \linebreak $\max \{|A+_G A|, |A-_G A|, |A\cdot_G A|\}$ when $A \subset \mathbb Z$, but does not for $A \subset \mathbb R$, highlights an interesting distinction between the sum-product problems in the real and integer settings. A similar situation occurs for the problem of bounding $\max \{|A+_G A|, |A\cdot_G A|\}$. Conditional on the Uniformity Conjecture, the non-trivial bound
\begin{equation} \label{SS}
\max \{|A+_G B|, |A\cdot_G B|\} \gg |G|^{3/5},
\end{equation}
implicit in the work of Shkredov and Solymosi \cite{SS}, holds for any $A,B \subset \mathbb Z$ and $G \subset A \times B$. This builds on work of Alon et al. \cite{Alon}. See also \cite{MRNSW} for applications of the Uniformity Conjecture to sum-product problems on sparse graphs. 

The previous construction \eqref{construction} shows that the bound \eqref{SS} does not hold for all $A \subset \mathbb R$. However, one can build such sets by considering the following picture.

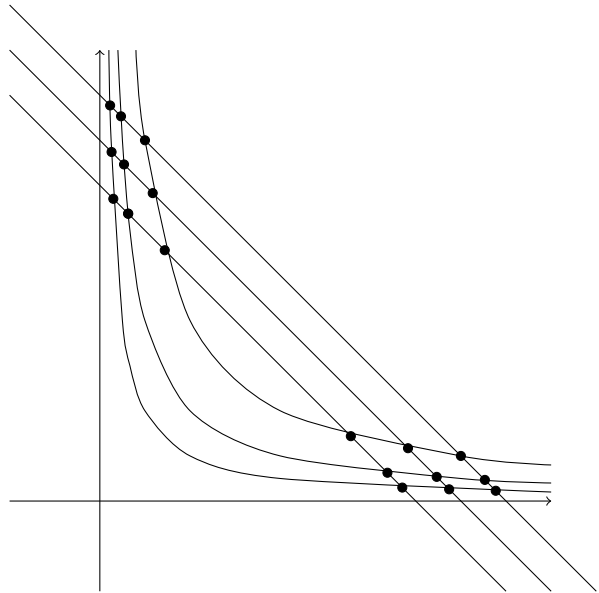
\begin{figure}[h!]
    \centering
       \begin{tikzpicture}[scale =1.2]
       \draw [->] (0,-1) -- (0,5);
\draw [->] (-1,0) -- (5,0);
\draw plot [smooth] coordinates { (0.1,5) (0.125,4) (1/4,2) (0.3333 ,1.5) (1/2,1) (1,1/2) (2,1/4) (5,0.1)  };
       \draw plot [smooth] coordinates { (1/5,5) (1/4,4) (0.3333 ,3) (1/2,2) (1,1) (2,1/2) (4,1/4) (5,1/5)  };
       \draw plot [smooth] coordinates { (0.4,5) (1/2,4) (1,2) (2,1) (4,1/2) (5,0.4)  };
       \draw (-1,5) -- (5,-1);
       \draw (-1,4.5) -- (4.5,-1);
       \draw (-1,5.5) -- (5.5,-1);
       
       \draw[fill] (3.870,0.130) circle [radius=0.05];
       \draw[fill] (0.130, 3.870) circle [radius=0.05];
       \draw[fill] (0.268, 3.732) circle [radius=0.05];
       \draw[fill] ( 3.732, 0.268) circle [radius=0.05];
       \draw[fill] ( 3.414, 0.58578) circle [radius=0.05];
       \draw[fill] ( 0.58578, 3.414) circle [radius=0.05];
       \draw[fill] ( 3.3507810593582, 0.14921894064179) circle [radius=0.05];
       \draw[fill] ( 0.14921894064179, 3.3507810593582) circle [radius=0.05];
        \draw[fill] ( 3.1861406616345, 0.31385933836549) circle [radius=0.05];
        \draw[fill] (  0.31385933836549, 3.1861406616345) circle [radius=0.05];
        \draw[fill] (  0.31385933836549, 3.1861406616345) circle [radius=0.05];
        \draw[fill] (  2.7807764064044, 0.71922359359558) circle [radius=0.05];
        \draw[fill] ( 0.71922359359558, 2.7807764064044) circle [radius=0.05];
        \draw[fill] ( 4.3860009363294, 0.11399906367062) circle [radius=0.05];
        \draw[fill] (  0.11399906367062, 4.3860009363294) circle [radius=0.05];
        \draw[fill] (  4.2655644370746, 0.23443556292536) circle [radius=0.05];
        \draw[fill] (  0.23443556292536, 4.265564437074) circle [radius=0.05];
        \draw[fill] (  4, 0.5) circle [radius=0.05];
        \draw[fill] ( 0.5, 4) circle [radius=0.05];

4.2655644370746
0.23443556292536

\end{tikzpicture}
    \caption{$n$ lines,  and $n$ hyperbolas of the form $xy=c$ (in this illustration we just show the positive quadrant). The intersection points make a set $G \subset A \times B$ with the property that $|A+_G B|, |A \cdot_G B| \ll |G|^{1/2}$.}
    \label{fig:my_label}
\end{figure}

This illustration shows $n$ parallel lines with slope $-1$, and $n$ hyperbolas with equation $xy=c$. Each line intersects each hyperbola in two points, and so these objects intersect pairwise to give a set $G$ of $2n^2$ points. We can write $G \subset A \times B$ for some $A,B \subset \mathbb R$. $A$ is simply the projection of the point set $G$ onto the $x$-axis, and likewise $B$ is the projection onto the $y$-axis. However, by construction,
\[
|A+_G B|=n. 
\]
This is because the elements of $A+_G B$ correspond precisely with the $n$ parallel lines in the picture. Similarly, the elements of $A \cdot_G B$ are in bijection with the $n$ hyperbolas. We conclude that
\[
|A+_G B|, |A\cdot_G B| = \frac{1}{\sqrt 2 }|G|^{1/2}.
\]

\subsection{Structure of the rest of this paper}
The next section describes the main tool to be used throughout this paper, which is a bound for the Elekes-Szab\'{o} problem from \cite{RSdZ}. In section \ref{sec:proof}, the proof of Theorem \ref{thm:main} is given. Section \ref{sec:construction} describes a non-trivial construction which shows that the exponent $6/11$ in Theorem \ref{thm:main} cannot be improved beyond $3/4$. Section \ref{sec:variants} considers some variants of the main problem using different combinations of three sets. We give a construction for one such problem, which shows that a non-trivial lower bound is impossible, and indicate a variant for which an analogue of Theorem \ref{thm:main} does hold. Finally, in section \ref{sec:SP}, we use a similar argument to prove a new bound for $\max \{ |A+_G A|, |A \cdot_G A|\}$. This result is non-trivial for $|G| \geq |A|^{3/2}$, a broader range of values than was previously known, and improves a result of Alon, Ruzsa and Solymosi \cite{ARS} for $|G| \leq |A|^{11/6}$.

\section{The Elekes-Szab\'{o} Problem} \label{sec:ES} 

The proof of Theorem \ref{thm:main} uses theory developed to tackle what we call Elekes-Szab\'{o} type problems, first considered in \cite{ER00} and \cite{ES}. Given a polynomial $F \in \mathbb R[x,y,z]$, let $Z(F)$ denote the zero set of $F$;
\[
Z(F):= \{(x,y,z) \in \mathbb R^3 : F(x,y,z)=0 \}.
\]
For arbitrary sets $A,B,C$ of the same cardinality $n$, Elekes and Szab\'{o} proved a non-trivial bound
\[
|Z(F) \cap A \times B \times C| \ll n^{2-c}
\]
for some positive constant $c$, provided that $F$ is \textit{non-degenerate}.

\begin{Definition} A polynomial $F \in \R[x,y,z]$ is \textit{degenerate} if there exists a one-dimensional sub-variety $Z_0 \subseteq Z(F)$ such that for all $v\in Z(F)\setminus Z_0$, there are open intervals $I_1,I_2,I_3\subseteq \R$ and injective real-analytic functions $\phi_i:I_i\rightarrow \R$ with real-analytic inverses ($i = 1,2,3$) such that $ v \in I_1\times I_2\times I_3$ and for any $(x,y,z)\in I_1\times I_2\times I_3$, we have
    \[ (x, y, z) \in Z(F) \text{ if and only if } \phi_1(x) + \phi_2(y) + \phi_3(z) = 0\,.\]
    Otherwise, we say that $F$ is \textit{non-degenerate}.

\end{Definition}

Some simple examples of degenerate polynomials are $F(x,y,z)= x +y +z$, $G(x,y,z)=xyz$ and $H(x,y,z)=x^2+y^3+z^{17}$. Meanwhile, for instance, $F(x,y,z)=(x-y)^2+x+z$ is non-degenerate (see \cite{MRNWdZ} for  a simple construction showing that this polynomial can have a relatively large intersection with a Cartesian product).

 Quantitative advances on the work of Elekes and Szab\'{o} have been attained in the intervening years, often motivated by applications in discrete geometry. We will make use of the following result of Raz, Sharir and de Zeeuw \cite{RSdZ}.

 \begin{Theorem} \label{11/6ES}
Let $F \in \mathbb R[x,y,z]$ be a non-degenerate irreducible polynomial of degree $d$, such that none of the partial derivatives $\frac{\partial F}{\partial x}, \frac{\partial F}{\partial y}, \frac{\partial F}{\partial z}$ vanish.
Then for all $A,B,C\subseteq \mathbb{R}$ we have
    \begin{equation}\label{eq:RSZ11/6}
    |Z(F) \cap (A \times B \times C)| = O_d( |A|^{1/2}|B|^{2/3}|C|^{2/3}+|A|^{1/2}(|A|^{1/2}+|B|+|C|)).
    \end{equation}

\end{Theorem}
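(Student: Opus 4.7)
The strategy is to convert the triple-counting problem into a point-curve incidence problem in the plane, and to apply an incidence bound refined so as to exploit the non-degeneracy of $F$.

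\textbf{Setup via planar curves.} For each $a \in A$, let $C_a = \{(y,z) \in \mathbb R^2 : F(a,y,z) = 0\}$, a planar algebraic curve of degree at most $d$. Because $\partial F/\partial x \not\equiv 0$, the map $a \mapsto C_a$ is generically injective, and any fixed point $(b,c)$ lies on at most $d$ curves in the family $\Gamma := \{C_a\}_{a \in A}$, since $F(x,b,c)$ is a non-zero polynomial of degree at most $d$ in $x$. Irreducibility of $F$ together with $\partial F/\partial y, \partial F/\partial z \not\equiv 0$ forces all but $O_d(1)$ of the $C_a$ to be irreducible of degree $d$, so by Bezout any two distinct such curves meet in at most $d^2$ points. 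Discarding the bounded exceptional set yields $|Z(F)\cap (A\times B\times C)| = I(B\times C,\Gamma) + O_d(|B|+|C|)$, and the family $\Gamma$ has two degrees of freedom.

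\textbf{Baseline and its insufficiency.} The standard Szemer\'edi-Trotter-type bound for algebraic curves with two degrees of freedom gives $I(B\times C,\Gamma) \ll_d (|B||C|)^{2/3}|A|^{2/3} + |B||C| + |A|$, which is the classical Elekes-Szab\'o bound and only yields $n^2$ in the balanced regime $|A|=|B|=|C|=n$. To improve the exponent of $|A|$ from $2/3$ down to $1/2$, one must show that not too many curves $C_a$ can simultaneously be very rich in $B\times C$-points; such a concentration phenomenon would intuitively force the variety $Z(F)$ to admit an additive structure.

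\textbf{Refinement via non-degeneracy.} The critical technical input is a quantitative rich-pair bound: for non-degenerate irreducible $F$, the number $N_t$ of pairs $(a,a') \in A^2$ for which $C_a \cap C_{a'}$ contains at least $t$ points of $B\times C$ satisfies $N_t \ll_d |A|^2/t^c$ for an appropriate exponent $c$. Dyadically decomposing $A$ by the richness $|C_a \cap (B\times C)|$ and applying the incidence bound level by level, together with a Cauchy-Schwarz / second-moment inequality whose diagonal-plus-pair decomposition is controlled by $N_t$, produces the main term $|A|^{1/2}|B|^{2/3}|C|^{2/3}$; the residual $|A|^{1/2}(|A|^{1/2}+|B|+|C|)$ arises from the boundary regimes of the dyadic sum (curves with extremely many incidences, or curves concentrated on a single horizontal or vertical fiber). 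The main obstacle is establishing the rich-pair bound: one argues that if it failed, a positive-dimensional sub-family of coincidences would force local real-analytic parametrisations of the form $\phi_1(x)+\phi_2(y)+\phi_3(z)=0$ on a subvariety of $Z(F)$, contradicting non-degeneracy. Promoting this qualitative dichotomy into a quantitative bound is the technical heart of Raz-Sharir-de Zeeuw, and requires a careful projection and monodromy analysis of $Z(F)$.
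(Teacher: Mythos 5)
First, a point of order: the paper does not prove Theorem \ref{11/6ES} at all --- it is quoted from Raz, Sharir and de Zeeuw \cite{RSdZ} and used as a black box. Judged against the actual argument of \cite{RSdZ}, your proposal has a gap that you yourself flag, and a structural problem that you do not. The flagged gap: your ``rich-pair bound'' $N_t \ll_d |A|^2/t^c$ is asserted, with its justification deferred to ``a careful projection and monodromy analysis of $Z(F)$''. That deferred step is not a lemma one can take on credit; converting the qualitative Elekes--Szab\'o dichotomy (too many coincidences force an additive parametrisation) into an effective, quantitative statement is essentially the entire content of the theorem, so what you have written is an outline, not a proof.

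The structural problem is worse: even granting your rich-pair bound, the scaffolding around it cannot produce the stated exponents. In your formulation the curves $C_a$ have degree at most $d$, so by Bezout two distinct curves with no common component meet in at most $d^2 = O_d(1)$ points; thus the richness parameter $t$ is bounded by $O_d(1)$ except for pairs sharing a component, and the off-diagonal second moment $\sum_{a \neq a'} |C_a \cap C_{a'} \cap (B\times C)|$ is of order $|A|^2$ regardless of what $N_t$ is. Cauchy--Schwarz over the points of $B \times C$ then gives only $|Z(F)\cap(A\times B\times C)| \ll |B||C| + (|B||C|)^{1/2}|A|$, which is trivial (of order $n^2$) in the balanced regime; no dyadic decomposition of the family $\{C_a\}$ repairs this. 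The argument of \cite{RSdZ} runs Cauchy--Schwarz in the opposite direction, over $a \in A$ --- this is the sole source of the factor $|A|^{1/2}$ --- reducing the problem to bounding the quadruple count
\begin{equation*}
Q = \#\{(b,c,b',c') \in (B\times C)^2 : \exists\, x \in \mathbb{R},\ F(x,b,c) = F(x,b',c') = 0\},
\end{equation*}
which is then interpreted as an incidence problem between the point set $C \times C$ and a family of resultant-type curves $\gamma_{b,b'}$ indexed by $B \times B$. A Pach--Sharir-type incidence bound gives $Q \ll_d |B|^{4/3}|C|^{4/3} + |B|^2 + |C|^2$, and substituting back yields exactly the right-hand side of \eqref{eq:RSZ11/6}. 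Non-degeneracy enters at precisely one point: to legitimise the incidence bound one must show that, outside a lower-dimensional exceptional set, only $O_d(1)$ index pairs $(b,b')$ generate the same curve $\gamma_{b,b'}$; it is in ruling out unbounded multiplicity that the local analytic parametrisations $\phi_1(x)+\phi_2(y)+\phi_3(z)=0$ are constructed and degeneracy is contradicted. Finally, a small factual error: irreducibility of $F$ does not force all but $O_d(1)$ of the fibres $C_a$ to be irreducible --- $F = xy^2 - z^2$ is irreducible, yet $F(a,y,z)$ factors for every $a>0$; what the incidence machinery actually requires is control of common components, which is a different (and also nontrivial) statement.
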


In practice, it is not always easy to check that a given polynomial is non-degenerate. To help with this task, we will use an idea introduced by Elekes and R\'onyai \cite{ER00},
which is that non-degeneracy can be verified using the following derivative test; see \cite[Lemma 33]{ES} or \cite[Lemma 2.2]{Z18}.

\begin{Lemma}\label{lem:difftest}
Let $f:\R^2\to \R$ be a smooth function on some open set $U\subset \R^2$ with $f_x$ and $f_y$ not identically zero.
There exist smooth functions $\psi, \varphi_1,\varphi_2$ on $U$ such that 
\[ f(x,y) = \psi(\varphi_1(x) + \varphi_2(y)),\]
if and only if
\begin{equation}\label{eq:test} \frac{\partial^2\left(\log|f_x/f_y|\right)}{\partial x\partial y}
\end{equation}
is identically zero on $U$.
\end{Lemma}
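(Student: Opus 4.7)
The plan is to argue the two directions separately. The forward direction is a direct chain-rule calculation: if $f(x,y) = \psi(\varphi_1(x) + \varphi_2(y))$, then differentiation gives $f_x = \psi'\cdot\varphi_1'(x)$ and $f_y = \psi'\cdot\varphi_2'(y)$, so $|f_x/f_y| = |\varphi_1'(x)/\varphi_2'(y)|$ and hence
\[
\log|f_x/f_y| \;=\; \log|\varphi_1'(x)| \,-\, \log|\varphi_2'(y)|,
\]
a sum of a function of $x$ and a function of $y$. Its mixed partial $\partial_x\partial_y$ therefore vanishes identically.

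For the converse, I would first pass to an open subset $V \subseteq U$ on which both $f_x$ and $f_y$ are nonzero (and hence of constant sign); such a $V$ exists because $f_x$ and $f_y$ are not identically zero and $f$ is smooth. The hypothesis then says $\partial_x\partial_y \log|f_x/f_y| \equiv 0$ on $V$, so on $V$ the quantity factors additively as $\log|f_x/f_y| = g(x) + h(y)$ for smooth $g, h$. Exponentiating and absorbing signs gives a relation $f_x/f_y = \alpha(x)/\beta(y)$ for smooth nonvanishing functions $\alpha, \beta$.

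I would then define $\varphi_1$ and $\varphi_2$ as local antiderivatives, $\varphi_1'(x) = \alpha(x)$ and $\varphi_2'(y) = \beta(y)$, so that $\varphi_1'(x)/\varphi_2'(y) = f_x/f_y$. The key observation is that the level curves of $f$ and of $\varphi_1(x) + \varphi_2(y)$ are both integral curves of the same ODE $dy/dx = -f_x/f_y = -\varphi_1'(x)/\varphi_2'(y)$, so the two families of level sets coincide locally. Consequently $f$ is constant along each level set of $\varphi_1 + \varphi_2$, and one can define $\psi$ by $\psi(t) = f(x_0, y_0)$ whenever $\varphi_1(x_0) + \varphi_2(y_0) = t$; well-definedness is exactly this coincidence of level sets, and smoothness of $\psi$ follows from the implicit function theorem applied to $\varphi_1(x) + \varphi_2(y) = t$ (valid because $\varphi_1' = \alpha \ne 0$).

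The main technical obstacle is bookkeeping rather than any conceptual leap: one must shrink to the open subset $V$ where $f_x, f_y$ are nonzero of constant sign in order to remove the absolute values, and one must ensure $\alpha, \beta \ne 0$ so that the implicit function theorem actually produces a smooth $\psi$. Once these local nondegeneracy conditions are secured, the argument reduces to the standard integrating-factor analysis of the $1$-form $f_x\,dx + f_y\,dy$, showing that $f$ admits a first integral of the separated form $\varphi_1(x) + \varphi_2(y)$.
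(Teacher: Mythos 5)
First, some important context: the paper itself contains \emph{no} proof of this lemma. It is quoted from the literature (Lemma 33 of \cite{ES}, Lemma 2.2 of \cite{Z18}), so there is no internal argument to compare yours against; what you have written is essentially the standard proof of the cited result. Your forward direction is correct and complete: differentiating $f=\psi(\varphi_1(x)+\varphi_2(y))$ gives $f_x/f_y=\varphi_1'(x)/\varphi_2'(y)$, so $\log|f_x/f_y|$ splits as a function of $x$ plus a function of $y$, and the mixed partial vanishes. This is also the only direction the paper ever uses (in the Claim of Section 3, and again in Section 6).

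Your converse, however, has a genuine gap at its very first step. From ``$f_x$ and $f_y$ are not identically zero'' you cannot conclude that there is an open $V\subseteq U$ on which both are nonzero: the open sets $\{f_x\neq 0\}$ and $\{f_y\neq 0\}$ can be nonempty and disjoint. Concretely, let $U$ be a connected U-shaped domain (two vertical arms joined by a bottom strip), and let $f$ equal a bump function of $x$ alone on the right arm, a bump function of $y$ alone on the left arm, and $0$ on the bottom strip, with the bumps supported away from the bottom so that the three definitions agree on overlaps and $f$ is smooth; then $f_x\neq 0$ only in the right arm and $f_y\neq 0$ only in the left arm. (On a rectangle this cannot happen: if the two sets were disjoint, each component of $\{f_x\neq0\}$ would consist of full vertical sections and each component of $\{f_y\neq0\}$ of full horizontal sections, which must intersect. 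But the lemma is stated for an arbitrary open $U$.) This defect is partly inherited from the paper's loose restatement of the lemma: the cited sources assume $f_x$ and $f_y$ vanish \emph{nowhere} on $U$, in which case one takes $V=U$. Two further repairs are needed even granting that hypothesis: the implication ``$\partial_x\partial_y L\equiv 0$ implies $L(x,y)=g(x)+h(y)$'' is valid on a rectangle but false on general open sets (there are counterexamples on slit domains), so you must shrink $V$ to a rectangle before separating variables; and your construction then produces $\psi,\varphi_1,\varphi_2$ only on that rectangle, not on all of $U$ as the statement literally demands, i.e.\ you prove the correct \emph{local} version of the converse, which is what the sources actually assert. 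The remainder of your argument --- integrating $\alpha,\beta$, matching the level curves of $f$ and $\varphi_1+\varphi_2$, and obtaining smoothness of $\psi$ from the implicit function theorem --- is the standard integrating-factor proof and is sound once these reductions are made honestly.
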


In practice, Lemma \ref{lem:difftest} allows us to test the whether or not the polynomial $F(x,y,z)$ is degenerate by rearranging the expression $F(x,y,z)=0$ into the form $z=f(x,y)$ (if this is possible), computing the resulting expression \eqref{eq:test}, and checking whether it is identically zero on an open set.

For the case when the expression $F(x,y,z)=0$ can already be rearranged into the form $z=f(x,y)$ where $f$ is a polynomial, the definition of non-degeneracy becomes more straightforward. See \cite[Theorem 2]{RSS}. For some applications in this paper, \cite[Theorem 2]{RSS} would be sufficient, and non-degeneracy can instead be checked by some elementary combinatorial arguments which use less algebraic theory. However, there are some cases where the results of \cite{RSS} do not immediately give us what we need, and so we use Theorem \ref{11/6ES} throughout in order to present a unified approach.

\section{Proof of Theorem \ref{thm:main}} \label{sec:proof}

Write
\[
C:=A+_G B,\,\,\,\,D:=A+_G \lambda B, \,\,\,\, E:=A \cdot_G B.
\]
We will double count solutions to the system of equations
\begin{align}
c&=a+b \label{eq1}
\\ d&=a+ \lambda b \label{eq2}
\\ e&=ab, \label{eq3}
\end{align}
such that $(a,b) \in G$ and $(c,d,e) \in C \times D \times E$. Define $S$ to be the number of solutions to this system. A simple observation is that
\begin{equation} \label{easy}
S=|G|
\end{equation}
since each pair $(a,b) \in G$ gives rise to a unique solution.

We seek to find a complementary upper bound for $S$ via an application of Theorem \ref{11/6ES}. We can eliminate $a$ and $b$ from the system above. Together, \eqref{eq1} and \eqref{eq2} imply that
\[
b=\frac{1}{1-\lambda}(c-d), \,\,\,\,\,\,\,\,\, a=  \frac{1}{1-\lambda}(d-\lambda c).
\]
Note that the assumption that $\lambda \neq 1$ is used here to ensure that these expressions are well defined. Substituting this information into \eqref{eq3} gives
\begin{equation} \label{3var}
e=\left(\frac{1}{1-\lambda}\right)^2 (c-d)(d-\lambda c).
\end{equation}
Define 
\[F(X,Y,Z)=\left(\frac{1}{1-\lambda}\right)^2 (X-Y)(Y-\lambda X)-Z.
\]
We have thus deduced that every contribution $(a,b,c,d,e)$ to $S$ gives rise to a solution $(c,d,e)$ to the equation $F(c,d,e)=0$. Furthermore, no triple $(c,d,e)$ contributes more than once to $S$, since for fixed $c$ and $e$ there exists at most one pair $(a,b)$ which satisfies both \eqref{eq1} and \eqref{eq3}. Therefore, we have
\begin{equation} \label{SandF}
S \leq |Z(F) \cap C \times D \times E|.
\end{equation}

Note also that $F$ is irreducible since the $Z$ term appears in isolation.

\begin{Claim}
$F$ is non-degenerate.
\end{Claim}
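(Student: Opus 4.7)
The plan is to apply the derivative test Lemma~\ref{lem:difftest} to the function obtained by solving $F(X,Y,Z)=0$ for $Z$. Since $F$ is linear in $Z$ with nonzero coefficient, this rearrangement gives $Z=f(X,Y)$ globally, where
\[ f(X,Y)=\left(\frac{1}{1-\lambda}\right)^{2}(X-Y)(Y-\lambda X). \]
For this polynomial the two notions of decomposition line up cleanly: a local additive decomposition $\phi_{1}(x)+\phi_{2}(y)+\phi_{3}(z)=0$ on $Z(F)$ corresponds, after solving for $z$, to a local decomposition $f(X,Y)=\psi(\varphi_{1}(X)+\varphi_{2}(Y))$ of the type that Lemma~\ref{lem:difftest} rules out precisely when \eqref{eq:test} is not identically zero.

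The computation is short because $f$ is a quadratic polynomial, so $f_{X}$ and $f_{Y}$ are linear forms, explicitly proportional to $(1+\lambda)Y-2\lambda X$ and $(1+\lambda)X-2Y$ respectively. In particular $\log|f_{X}/f_{Y}|$ is a difference of logarithms of two linear forms, and so the mixed partial in \eqref{eq:test} is a manifestly rational function of $X,Y$. Differentiating and combining terms, it takes the form
\[ 2(1+\lambda)\left[\frac{\lambda}{((1+\lambda)Y-2\lambda X)^{2}}-\frac{1}{((1+\lambda)X-2Y)^{2}}\right], \]
and the vanishing of the bracketed expression reduces, after clearing denominators and simplifying, to the polynomial identity $(\lambda-1)^{2}(\lambda X^{2}-Y^{2})\equiv 0$.

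The three excluded values $\lambda\in\{-1,0,1\}$ correspond exactly to the three ways this chain of reductions could collapse: $\lambda=1$ would kill the $(\lambda-1)^{2}$ factor (and in any case already ruins the rearrangement used to define $F$); $\lambda=0$ would make the $\lambda X^{2}$ term disappear, leaving $-Y^{2}$ which is no longer forced to be nonzero on an open set; and $\lambda=-1$ would kill the $(1+\lambda)$ prefactor outright, consistent with the fact that the difference-set case $\lambda=-1$ really is degenerate, as witnessed by the construction in \eqref{3construction}. For $\lambda\notin\{-1,0,1\}$ none of these collapses occurs, so \eqref{eq:test} is a nonzero rational function and hence not identically zero on any open set; by Lemma~\ref{lem:difftest}, $f$ admits no such additive decomposition, and therefore $F$ is non-degenerate. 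I expect no real obstacle; the only mildly delicate point is the translation between the two equivalent notions of decomposition, which is routine here because $F$ is linear in $Z$ and the implicit function theorem applies uniformly.
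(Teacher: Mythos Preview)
Your argument is correct and follows essentially the same route as the paper: solve $F=0$ for $Z$, apply Lemma~\ref{lem:difftest} to $f(X,Y)$, compute the mixed partial of $\log|f_X/f_Y|$, and reduce its vanishing to $(1-\lambda)^2(\lambda X^2-Y^2)=0$, which is not identically zero on any open set for $\lambda\notin\{-1,1\}$. One small correction to your commentary: your remark about $\lambda=0$ is backwards --- when $\lambda=0$ the expression becomes $-2/((1+\lambda)X-2Y)^2$, which is still nowhere zero, so the derivative test (and hence the non-degeneracy proof) goes through for $\lambda=0$ as well; the paper in fact notes this explicitly after the proof.
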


Assuming that the claim is correct, we can apply Theorem \ref{11/6ES} to obtain the upper bound
\[
|Z(F) \cap C \times D \times E| \ll |C|^{1/2}|D|^{2/3}|E|^{2/3} + |C|^{1/2}(|C|^{1/2}+|D|+|E|).
\]
Combining this with \eqref{SandF} and \eqref{easy}, we have
\[
|G| \ll |C|^{1/2}|D|^{2/3}|E|^{2/3} + |C|^{1/2}(|C|^{1/2}+|D|+|E|)
\]
and it follows that
\[
\max \{|C|,|D|,|E|\} \gg |G|^{6/11},
\]
as required. It remains to prove the claim.

\begin{proof}[Proof of Claim]

Proof by contradiction. Suppose that $F(X,Y,Z) $ is degenerate. 
Then there is some open neighbourhood $I_1\times I_2\times I_3$ intersecting $Z(F)$, such that $F(X,Y,Z) = 0$ if and only if $\varphi_1(X)+\varphi_2(Y) + \varphi_3(Z) = 0$, for some smooth functions $\varphi_1$, $\varphi_2$ and $\varphi_3$ with smooth inverses.
Then, since $\varphi_3$ has a smooth inverse on $I_3$, 
we can write $\psi(t)  = \varphi_3^{-1}(-t)$,
so that $F(X,Y,Z) = 0$ is equivalent to  $Z = \psi(\varphi_1(X) + \varphi_2(Y))$.
On the other hand, $F(X,Y,Z) = 0$ can be rearranged into the form
\[
Z = \left(\frac{1}{1-\lambda}\right)^2 (X-Y)(Y-\lambda X),
\] so there is an open set $U\subset I_1\times I_2$ on which we have
\[ \psi(\varphi_1(X) + \varphi_2(Y)) = \left(\frac{1}{1-\lambda}\right)^2 (X-Y)(Y-\lambda X).\]
Write 
\[f(X,Y)=\left(\frac{1}{1-\lambda}\right)^2 (X-Y)(Y-\lambda X).
\]
Lemma \ref{lem:difftest} then informs us that
\begin{equation} \label{calculation}
\frac{\partial^2\left(\log|f_X/f_Y|\right)}{\partial X\partial Y}
\end{equation}
is identically zero on $U$. We can calculate \eqref{calculation} directly and obtain a contradiction. Indeed,
\[\log|f_X/f_Y| = \log\left|\frac{-2\lambda X +Y+\lambda Y}{-2 Y +X+\lambda X}\right|
= \log|-2\lambda X +(1+\lambda) Y| - \log|-2Y +(1+\lambda) X|,\]
and so
\begin{align*}\frac{\partial^2\left(\log|f_X/f_Y|\right)}{\partial X\partial Y}
&= \frac{\partial}{\partial X} \left(\frac{1+\lambda}{-2\lambda X +(1+\lambda) Y} + \frac{2}{-2Y +(1+\lambda) X}\right)
\\&=\frac{2\lambda(1+\lambda)}{(-2\lambda X +(1+\lambda) Y)^2} - \frac{2(1+\lambda)}{(-2Y +(1+\lambda) X)^2}.
\end{align*}
A rearrangement of the latter expression is
\begin{equation} \label{frac}
[2(1+\lambda)(4\lambda-(1+\lambda)^2)] \frac{Y^2-\lambda X^2}{(-2\lambda X +(1+\lambda) Y)^2(-2Y +(1+\lambda) X)^2}.
\end{equation}
The assumption that $\lambda \neq 1,-1$ means that the expression is square brackets is non-zero. Therefore, \eqref{frac} equals zero only when $Y^2= \lambda X^2$, so it does not vanish on any nontrivial open set.  
This gives the required contradiction, which completes the proof of the claim and also the theorem.
\end{proof}

Note that in this proof we did not use the hypothesis that $\lambda \neq 0$. In fact, Theorem \ref{thm:main} holds for the case $\lambda =0$, if we define $A+_G 0B$ to be the set $\{a+0b: (a,b) \in G \}= \linebreak \{a : \exists b, (a,b) \in G \}$.

\section{Small $\max \{ |A+_G A|, |A+_G \lambda A|, |A \cdot_G A|\}$} \label{sec:construction}

The following construction shows that the exponent $6/11$ in Theorem \ref{thm:main} cannot be improved beyond $3/4$. The construction is taken from a recent paper of Alon, Ruzsa and Solymosi \cite{ARS2}, where it was used to illustrate limitations to the growth of $\max \{|A+_GA|, |A\cdot_G A| \}$. It applies verbatim to the problem of bounding $\max \{|A+_GA|, |A+_G \lambda A|, |A\cdot_G A| \}$, and we include the details for the completeness of this paper.

\begin{Theorem}
For any $n \in \mathbb N$ and $\lambda \in \mathbb Z$, there exists a set $A \subset \mathbb R$ with $|A|=n$ and $G \subset A \times A$ 
such that $|G| \gtrsim n^{8/5}$ and
\begin{equation} \label{con1}
\max \{|A+_G A|, |A+_G \lambda A|, |A \cdot_G A|\} \lesssim n^{6/5}.  
\end{equation}
In particular,
\begin{equation} \label{con2}
\max \{|A+_G A|, |A+_G \lambda A|, |A \cdot_G A| \} \lesssim |G|^{3/4}.
\end{equation}

\end{Theorem}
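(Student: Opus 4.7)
The plan is to adapt wholesale the construction from \cite{ARS2}. That paper produces, for each $n$, a set $A \subset \mathbb R$ with $|A| = n$ and a graph $G \subset A \times A$ with $|G| \gtrsim n^{8/5}$ satisfying
\[
|A +_G A|,\ |A \cdot_G A| \lesssim n^{6/5}.
\]
I would first recall the construction and its analysis in detail. The set $A$ there is taken to be a generalized arithmetic progression of bounded dimension, so that the bound on $|A +_G A|$ is inherited from the global estimate $|A + A| \lesssim n$. The bound on $|A \cdot_G A|$ is the delicate part: $G$ is defined to force all restricted products into a prescribed small set, and the $|G| \gtrsim n^{8/5}$ edge count is extracted from an incidence-geometric argument tailored to the algebraic structure of $A$.

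The one additional verification needed here, beyond what \cite{ARS2} already supplies, is that $|A +_G \lambda A| \lesssim_\lambda n^{6/5}$ for $\lambda \in \mathbb Z$. This is immediate from the GAP structure: if $A$ sits inside a GAP of dimension $d$, then for any $\lambda \in \mathbb Z$ we have $|A + \lambda A| \le (|\lambda|+1)^d |A| \lesssim_\lambda n$, and consequently $|A +_G \lambda A| \le |A + \lambda A| \lesssim_\lambda n \le n^{6/5}$. Thus the bound on $|A +_G \lambda A|$ comes essentially for free from the additive structure of $A$ --- this is what is meant by the remark that the ARS2 construction "applies verbatim" to our three-set problem.

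Having \eqref{con1} in hand, \eqref{con2} is purely arithmetic: since $|G| \gtrsim n^{8/5}$ and $6/5 = (8/5)\cdot(3/4)$, we have $|G|^{3/4} \gtrsim n^{6/5}$, so any upper bound of order $n^{6/5}$ on the maximum of the three image sets is automatically of order $|G|^{3/4}$.

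The main obstacle, as expected, is the reproduction of the ARS2 guarantee $|G| \gtrsim n^{8/5}$ together with $|A \cdot_G A| \lesssim n^{6/5}$; this requires care because it simultaneously demands many edges and heavy concentration of products onto a small target set, and is the one step that is not a routine verification. Everything else --- the sumset bounds (including the new $\lambda$ one) and the passage from \eqref{con1} to \eqref{con2} --- is elementary.
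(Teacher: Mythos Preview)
Your plan rests on a mistaken description of the ARS2 construction. The set $A$ used there (and reproduced in the paper) is
\[
A=\left\{ \frac{u\sqrt v}{\sqrt w} : u,v,w \text{ distinct primes},\ v,w \le n^{1/5},\ u \le n^{3/5}\right\},
\]
which is \emph{not} a generalized arithmetic progression, and its full sumset $A+A$ is certainly not of order $n$: distinct ratios $\sqrt{v}/\sqrt{w}$ are linearly independent over $\mathbb Q$, so generic sums are all different. Consequently your proposed route to the $\lambda$-bound, namely $|A+_G\lambda A|\le |A+\lambda A|\lesssim_\lambda n$ from a GAP dilation inequality, breaks down; the global set $A+\lambda A$ is far too large.

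You also have the roles of ``easy'' and ``delicate'' inverted. In the actual construction the \emph{product} bound is immediate: along $G$ one pairs $u\sqrt v/\sqrt w$ with $z\sqrt w/\sqrt v$, so the product is simply $uz$ with $u,z\le n^{3/5}$, giving $|A\cdot_G A|\le n^{6/5}$ with no incidence geometry whatsoever. The sumset bounds are what require the structure of $G$: one gets
\[
A+_G A \subset \left\{\frac{uv+zw}{\sqrt{vw}}\right\},\qquad A+_G \lambda A \subset \left\{\frac{uv+\lambda zw}{\sqrt{vw}}\right\},
\]
and counts $O(n^{4/5})$ numerators and $O(n^{2/5})$ denominators to reach $n^{6/5}$. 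The $\lambda$-case is indeed a one-line modification of the $\lambda=1$ case, but the mechanism is the shape of the elements of $G$, not any global additive smallness of $A$. Your passage from \eqref{con1} to \eqref{con2} is fine.
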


\begin{proof}
Let $n \in \mathbb N$ and define
\[
A:=\left \{ \frac{u \sqrt v}{\sqrt w}   : u,v,w \text{ are distinct primes, } v,w \leq n^{1/5}, u \leq n^{3/5} \right \}.
\]
Note that $\frac{u \sqrt v}{\sqrt w} = \frac{u' \sqrt v'}{\sqrt w'}$ if and only if $(u,v,w)=(u',v',w')$, and so $|A| \gtrsim n$.
Define
\[
G:= \left\{\left(\frac{u\sqrt v}{ \sqrt w},\frac{z\sqrt w}{ \sqrt v}\right ) : v,w,u,z \text{ are distinct primes, } v,w \leq n^{1/5}, u,z \leq n^{3/5} \right \}
\]
and observe that $|G| \gtrsim n^{8/5}$. It remains to check that $|A+_G A|, |A+_G \lambda A|,|A \cdot_G A| \lesssim n^{6/5}$. 
\begin{itemize}
    \item Note that \[
    A+_GA \subset \left\{ \frac{uv+zw}{\sqrt{vw}} : u,v,w,z \in \mathbb N, v,w \leq n^{1/5}, u,z \leq n^{3/5} \right\}.
    \]
    There are at most $2n^{4/5}$ possible values for the numerator and at most $n^{2/5}$ possible values for the denominator. Thus $|A+_G A| \ll n^{6/5}$.
    \item As above, \[
    A+_G \lambda A \subset \left\{ \frac{uv+\lambda zw}{\sqrt{vw}} : u,v,w,z \in \mathbb N, v,w \leq n^{1/5}, u,z \leq n^{3/5} \right\},
    \]
    and thus $|A+_G \lambda A| \ll_{\lambda} n^{6/5}$.
    \item We have 
    \[
    A \cdot_G A \subset \{uz: u,z \in \mathbb N, u,z \leq n^{3/5} \}
    \]
    and so $|A \cdot_G A | \leq n^{6/5}$.
\end{itemize}
\end{proof}

\section{Other sum-product type questions with three sets along graphs} \label{sec:variants}

\subsection{Products cannot be replaced with ratios}

The statement of Theorem \ref{thm:main} is rather sensitive to apparently small changes. The next result shows that we cannot replace the product set with the ratio set and still obtain a non-trivial result. Given $A,B \subset \mathbb R$ and $G \subset A \times B $, define the ratio set of $A$ and $B$ along $G$ to be
\[
A /_G B:= \{ a/b : (a,b) \in G \}.
\]

\begin{Theorem} \label{thm:cons2}
For any $n \in \mathbb N$ and $\lambda \in \mathbb R \setminus \{0\}$, there exists $G \subset X \times Y$ with $|G|=n^2$
such that 
\begin{equation} 
\max \{|X+_G Y|, |X+_G \lambda Y|, |X /_G Y|\} \ll n.  
\end{equation}
In particular, $\max \{|X+_G Y|, |X+_G \lambda Y|, |X /_G Y|\} \ll |G|^{1/2}$.
\end{Theorem}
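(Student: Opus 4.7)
The plan is to construct $X$, $Y$, and $G$ via simultaneous intersections of three families of lines in $\RR^2$: the family $x + y = c$ (encoding sums), the family $x + \lambda y = d$ (encoding $\lambda$-sums), and the family $y = rx$ through the origin (encoding ratios). I want to choose $n$ lines in each family so that they produce $n^2$ triple intersection points, from which $G$ and the projections $X$, $Y$ are read off.

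The case $\lambda = 1$ is immediate: the first two families coincide, so I only need $n$ parallel lines of slope $-1$ and $n$ distinct lines through the origin. These pairwise non-parallel lines intersect in $n^2$ points, and taking $X$, $Y$ to be the axis projections gives $|X +_G Y|, |X /_G Y| \leq n$, with $X +_G \lambda Y = X +_G Y$.

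For $\lambda \neq 1$, a short calculation shows that the three lines $x + y = c$, $x + \lambda y = d$, $y = rx$ share a common point (with $r \neq -1$) precisely when $d = c \cdot \tfrac{1 + r\lambda}{1 + r}$. So, fixing any $n$ values of $c$ in a set $C$ and any $n$ values of $r$ in a set $R$ produces $n^2$ triple intersections whose $d$-coordinates lie in the product set $C \cdot M$, where $M := \{\tfrac{1+r\lambda}{1+r} : r \in R\}$. To force $|C \cdot M| = O(n)$, the natural move is to take $C$ and $M$ to be geometric progressions with the same ratio: pick $q > 1$ (generic enough that $q^i \neq \lambda$ for $1 \leq i \leq n$), set $M = \{q^i : 1 \leq i \leq n\}$, invert to obtain $R = \{r_i = \tfrac{q^i - 1}{\lambda - q^i} : 1 \leq i \leq n\}$, and take $C = \{q^j : 1 \leq j \leq n\}$. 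Then $C \cdot M = \{q^{i+j}\}$ has size at most $2n - 1$.

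Let $G$ be the set of $n^2$ triple intersection points and let $X$, $Y$ be their projections onto the two axes. The three required bounds then follow directly from the construction, since $X +_G Y \subseteq C$, $X +_G \lambda Y \subseteq C \cdot M$, and $X /_G Y \subseteq \{1/r : r \in R\}$, each of size $O(n)$. The main verification — and really the only obstacle in the argument — is that all $n^2$ intersection points are genuinely distinct, so that $|G| = n^2$ exactly. This will follow from the identities $x + y = q^j$ and $x + \lambda y = q^{i+j}$ holding at each triple intersection, which recover $(j, i+j)$, and hence $(i,j)$, from $(x,y)$ as soon as $k \mapsto q^k$ is injective on $\{1, \ldots, 2n\}$; for $q > 1$ this is automatic, and the finitely many further degeneracies (vanishing denominators, coincidences of intersection points) are excluded by a generic choice of $q$.
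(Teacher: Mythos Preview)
Your proof is correct and arrives at essentially the same construction as the paper, but by a different route. The paper's argument is projective: it starts with the grid $P=\{2^i\}\times\{2^j\}$, which is covered by three pencils of $O(n)$ lines (horizontal, vertical, and through the origin), and then applies a projective transformation sending the three pencil centres to the points at infinity for slopes $-1$ and $-1/\lambda$ and to the origin. Incidence is preserved, so the image $G=\pi(P)$ automatically has $|X+_GY|,|X+_G\lambda Y|,|X/_GY|=O(n)$.

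You instead work forwards from the three target pencils, compute the concurrence condition $d=c\cdot\frac{1+r\lambda}{1+r}$, and force it to have few solutions by taking $C$ and $M=\{\frac{1+r\lambda}{1+r}:r\in R\}$ to be geometric progressions with the same ratio. Unwinding your formulae gives points $\bigl(\tfrac{\lambda q^{j}-q^{i+j}}{\lambda-1},\tfrac{q^{i+j}-q^{j}}{\lambda-1}\bigr)$, which after reparametrisation match the paper's explicit description $G=\{(2^i+\lambda 2^j,\,-2^i-2^j)\}$. So the constructions coincide; the paper's projective viewpoint explains \emph{why} geometric progressions are the natural choice (they are exactly what makes the origin-pencil small on a grid), while your approach is more elementary and self-contained, requiring no projective geometry. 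Your distinctness verification via recovering $(j,i+j)$ from $(x+y,x+\lambda y)$ is clean and adequate; the degeneracies you defer to a ``generic $q$'' are only $q^i=\lambda$, and you could note that for $\lambda\neq 1$ one always has $r_i\neq -1$ and the $r_i$ are automatically distinct, so no further genericity is needed.
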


\begin{proof}
Let $A=\{2^{i} : i \leq n\}$ and $P=A \times A$. In the projective plane, we have three pencils of $O(n)$ lines which cover $P$; the pencil of horizontal lines (intersecting at a common point on the line at infinity) which we label as $\mathcal L_1$, the pencil of vertical lines, labelled $\mathcal L_2$, and the pencil of lines through the origin, labelled $\mathcal L_3$.

For any two triples $p_1,p_2,p_3$ and $q_1,q_2,q_3$ of non-collinear points in the projective plane, there exists a projective transformation $\pi$ such that $\pi(p_i)=q_i$ for $i=1,2,3$. We can thus find a projective transformation $\pi$ such that
\begin{itemize}
    \item $\pi(\mathcal L_1)$ is a pencil of parallel lines with slope $-1$,
    \item $\pi(\mathcal L_2)$ is a pencil of parallel lines with slope $-1/\lambda$,
    \item $\pi(\mathcal L_3)$ is (still) a pencil of  lines through the origin.
\end{itemize}
Moreover, since projective transformations preserve incidence structure, each of these three pencils of size $O(n)$ covers the set $\pi(P)$. Label $G:=\pi(P)$ and note that $G \subset X \times Y$ for some sets $X,Y \subset \mathbb R$.

The elements of the pencil $\pi(\mathcal L_1)$ are in bijection with $X+_G Y$, and so $|X+_G Y| \leq n$. Similarly, there is a bijection from $\pi(\mathcal L_2)$ to $X+_G \lambda Y$ and from $\pi(\mathcal L_3)$ to $X /_G Y$.

\end{proof}

By computing the projective transformation $\pi$ in the previous proof explicitly, we can also explicitly describe the construction of the sets $X$, $Y$ and $G$ given by Theorem \ref{thm:cons2}. Define
\begin{align*}
X&=\{2^{i}+\lambda2^j :i,j \in \mathbb N, i,j \leq n \},
\\ Y & =\{-2^{k}-2^l :k,l \in \mathbb N, k,l \leq n \},
\\ G&=\{ (2^{i}+\lambda2^j, -2^{i}-2^j) : i,j \in \mathbb N, i,j \leq n\}.
\end{align*}
Note that $|G|=n^2$ and $|X+_G Y|, |X+_G \lambda Y| = n=|G|^{1/2}$. Meanwhile,
\[
X/_G Y= \left \{-\frac{1+\lambda2^{j-i} }{1+2^{j-i}} : i,j \in \mathbb N, i,j \leq n\right \}.
\]
There are $2n-1$ values of $j-i$, and so $|X/_G Y| \leq 2n-1 \ll |G|^{1/2}$.

\subsection{A variant of Theorem \ref{thm:main}}

Theorem \ref{thm:cons2} shows the limitations to possible generalisations of Theorem \ref{thm:main}. On the other hand, there do exist some natural sum-product type statements which can be derived by small modifications of the proof of Theorem \ref{thm:main}. We present once such an example here.

\begin{Theorem} \label{thm:products}
Let $A, B \subset \mathbb R$ and $G \subset A \times B$. Let $\alpha, \beta \in \mathbb R$ with $\alpha \neq \beta$. Then
\begin{equation} \label{prodeq}
\max\{ |A+_G B|, |A\cdot_G B|, |(A+\alpha) \cdot_G (B+ \beta)| \} \gg |G|^{6/11}
\end{equation}
\end{Theorem}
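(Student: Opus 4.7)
The plan is to mirror the proof of Theorem \ref{thm:main}. First, set
\[
C := A +_G B, \qquad D := A \cdot_G B, \qquad E := (A+\alpha) \cdot_G (B+\beta),
\]
and let $S$ be the number of quintuples $(a,b,c,d,e)$ with $(a,b) \in G$, $(c,d,e) \in C \times D \times E$, and $c = a+b$, $d = ab$, $e = (a+\alpha)(b+\beta)$. Each pair $(a,b) \in G$ contributes exactly one such quintuple, so $S = |G|$.

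Next I would eliminate $a$ and $b$. Expanding $e = d + \beta a + \alpha b + \alpha\beta$ gives $\beta a + \alpha b = e - d - \alpha\beta$, which together with $a + b = c$ is a linear system whose determinant $\alpha - \beta$ is nonzero by hypothesis. Solving for $a$ and $b$ uniquely in terms of $(c,d,e)$ and substituting into $ab = d$ yields the polynomial constraint $F(c,d,e) = 0$, where
\[
F(X,Y,Z) := (Z - Y - \alpha\beta - \alpha X)(Z - Y - \alpha\beta - \beta X) + (\alpha - \beta)^2 Y.
\]
Since the map $(c,d,e) \mapsto (a,b)$ is a function, $S \leq |Z(F) \cap C \times D \times E|$. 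Viewed as a quadratic in $Z$, $F$ has discriminant $(\alpha-\beta)^2(X^2 - 4Y)$, which is not a perfect square in $\mathbb R[X,Y]$, so $F$ is irreducible; a short direct check confirms that none of $F_X, F_Y, F_Z$ vanishes identically.

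The hard part will be verifying non-degeneracy of $F$. Solving $F = 0$ for $Z$ (picking one branch of the square root) gives
\[
Z = f(X, Y) := Y + \alpha\beta + \tfrac{1}{2}\bigl((\alpha+\beta) X + (\alpha - \beta)\sqrt{X^2 - 4Y}\bigr),
\]
so by Lemma \ref{lem:difftest} I need to show $\partial_X \partial_Y \log|f_X/f_Y| \not\equiv 0$. Writing $s := \sqrt{X^2 - 4Y}$, $\gamma := \alpha - \beta$, and $\delta := \alpha + \beta$, a direct computation yields $\log|f_X/f_Y| = \log|\delta s + \gamma X| - \log|s - \gamma|$ up to a constant. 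Setting the mixed partial to zero and clearing denominators reduces to the polynomial identity
\[
\delta\bigl(2\delta X s + \gamma(X^2 + s^2)\bigr)(s - \gamma)^2 = X(2s - \gamma)(\delta s + \gamma X)^2
\]
in the variables $X$ and $s$. Specializing at $s = \gamma$ makes the left side vanish, whereas the right side becomes $\gamma^3 X(X + \delta)^2$, a nonzero polynomial in $X$ since $\gamma \neq 0$. Hence the two sides disagree as polynomials, so the identity fails and $F$ is non-degenerate.

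Applying Theorem \ref{11/6ES} to $F$ then gives $|G| = S \ll N^{11/6} + N^{3/2}$ with $N = \max\{|C|, |D|, |E|\}$, and rearranging yields the desired bound $N \gg |G|^{6/11}$.
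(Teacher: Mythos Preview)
Your proof is correct and follows precisely the approach the paper indicates (the paper omits the proof, stating only that it repeats the template of Theorem~\ref{thm:main} with a more complicated polynomial $F$ and a harder non-degeneracy check). Your factored form of $F$, the discriminant argument for irreducibility, and the specialization $s=\gamma$ to kill the left-hand side while leaving $\gamma^3 X(X+\delta)^2$ on the right are all clean ways to carry out the details the paper leaves to the reader; treating $(X,s)$ as independent coordinates is legitimate since $(X,Y)\mapsto(X,\sqrt{X^2-4Y})$ is a local diffeomorphism on the relevant domain.
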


We omit the proof of Theorem \ref{thm:products} since it largely repeats the arguments used to prove Theorem \ref{thm:main}. The main difference is that the polynomial $F(X,Y,Z)$ in the proof is more complicated than the corresponding polynomial appearing in the proof of Theorem \ref{thm:main}, and so is the task of checking the non-degeneracy conditions for this $F$.

All three sets are needed to get a non-trivial bound in Theorem \ref{thm:products}. Consider the following example, in the same spirit as the construction from section \ref{sec:picture}, for which
\[
\max\{ |A\cdot_G B|, |(A+\alpha) \cdot_G (B+ \beta)| \} < |G|^{1/2}.
\]
Let $C,D \subset \mathbb R$ be arbitrary sets of cardinality $n$, and consider two sets of hyperbolae
\[
H_1:= \{ xy= c : c \in C \},\,\,\,\,\,\, H_2:= \{(x+\alpha)(y+\beta)=d: d \in D \}.
\]
Let $G$ be the set of all intersection points of two hyperbolas, one from each of the two families. For suitable choices of $C$ and $D$, all of these pairs of hyperbolas intersect in two points, and so $|G|=2n^2$. We have $G \subset A \times B$ for some $A,B \subset \mathbb R$. By construction,
\[
A \cdot_G B = C, \,\,\,\text{and}\,\,\,\, (A+\alpha) \cdot_G (B+\beta) =D,
\]
so that $|A \cdot_G B|, |(A+\alpha) \cdot_G (B+\beta)| = \frac{1}{\sqrt 2}|G|^{1/2}$.

On the other hand, the condition in Theorem \ref{thm:products} that $\alpha \neq \beta$ may not be necessary. It is conceivable that the more relaxed condition that at least one of $\alpha$ or $\beta$ is non zero is sufficient to reach the conclusion \eqref{prodeq} of Theorem \ref{thm:products}. 

\section{Just sums and products} \label{sec:SP}

There exist unconditional non-trivial lower bounds for $\max\{|A+_G A|,|A \cdot_G A| \}$ for sufficiently dense graphs $G$.  Alon, Ruzsa and Solymosi \cite{ARS} proved that
\begin{equation} \label{SPgraphs}
\max \{|A+_G A|, |A\cdot_G A|\} \gg \frac{|G|^{3/2}}{|A|^{7/4}},
\end{equation}
and it follows from this that $\max \{|A+_G A|, |A\cdot_G A|\} \gg |G|^{1/2+\epsilon}$ provided that $|G| \gg |A|^{7/4+\epsilon'}$. The argument in \cite{ARS} is a modification of Elekes's \cite{E} proof of a sum-product estimate using a single application of the Szemer\'{e}di-Trotter Theorem.

We can also use Theorem \ref{11/6ES} to give an alternative bound for $\max\{ |A+_G A|, |A \cdot_G A|\}$. In the case when $A=B$, the following result gives an improvement to \eqref{SPgraphs} when $|G|=o(|A|^{11/6})$, and extends the range of sizes of $G$ for which we can obtain a non-trivial bound $\max \{|A+_G B|, |A \cdot_G B| \} \geq |G|^{1/2+c}$.

\begin{Theorem} \label{thm:SP}
Let $A,B \subset \mathbb R$ and let $G \subset A \times B$. 
Then
\begin{equation} \label{ourSP}
\max \{|A+_G B|, |A \cdot_G B| \}\gg \frac{|G|^{3/4}}{|A|^{3/8}}.
\end{equation}
In particular, for any $\epsilon>0$,
\begin{equation} \label{nontrivial}
|G| \geq |A|^{3/2+ \epsilon} \Rightarrow \max \{|A+_G B|, |A \cdot_G B| \}\gg |G|^{1/2+ \epsilon'},
\end{equation}
where $\epsilon'=\frac{4\epsilon}{24+16\epsilon}$.

\end{Theorem}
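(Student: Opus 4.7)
\textbf{Proof plan for Theorem \ref{thm:SP}.} The plan is to mimic the Elekes--Szab\'o argument used for Theorem \ref{thm:main}, but since only two ``new'' sets $C := A+_GB$ and $E := A\cdot_GB$ are available, the third set fed into Theorem \ref{11/6ES} will be $A$ itself. Eliminating $b$ from the system $c = a+b$, $e = ab$ gives $e = a(c-a) = ac - a^2$, which suggests working with
\[
F(X,Y,Z) := XY - X^2 - Z .
\]
Each $(a,b) \in G$ yields a triple $(a,a+b,ab) \in A \times C \times E$ on $Z(F)$, and this map is injective since $b = c-a$ is recovered from $(a,c)$. Hence $|Z(F) \cap (A \times C \times E)| \geq |G|$. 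Irreducibility of $F$ is immediate because $Z$ appears linearly and in isolation, and the partial derivatives $Y-2X$, $X$, $-1$ are manifestly nonzero polynomials.

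For non-degeneracy I apply Lemma \ref{lem:difftest} to the solved form $Z = f(X,Y) := XY - X^2$. From $f_X = Y-2X$ and $f_Y = X$,
\[
\frac{\partial^2(\log|f_X/f_Y|)}{\partial X\,\partial Y} \;=\; \frac{\partial}{\partial X}\!\left(\frac{1}{Y-2X}\right) \;=\; \frac{2}{(Y-2X)^2},
\]
which is not identically zero on any open set. Thus $F$ satisfies every hypothesis of Theorem \ref{11/6ES}. I anticipate this non-degeneracy check to be the only non-routine ingredient; the rest is bookkeeping.

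Setting $M := \max\{|C|,|E|\}$ and applying Theorem \ref{11/6ES}, together with $M \geq 1$ so that $M \leq M^{4/3}$, gives
\[
|G| \;\ll\; |A|^{1/2} M^{4/3} + |A|^{1/2}(|A|^{1/2} + M) \;\ll\; |A|^{1/2} M^{4/3} + |A|.
\]
If the first term dominates, then $M \gg (|G|/|A|^{1/2})^{3/4} = |G|^{3/4}/|A|^{3/8}$, which is the desired bound \eqref{ourSP}. Otherwise $|G| \ll |A|$, and in this regime the trivial bound $M \gg |G|^{1/2}$ of \eqref{trivial} already implies $M \gg |G|^{3/4}/|A|^{3/8}$, because $|G| \ll |A| \leq |A|^{3/2}$ is equivalent to $|G|^{1/2} \gg |G|^{3/4}/|A|^{3/8}$.

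For the implication \eqref{nontrivial}, the hypothesis $|G| \geq |A|^{3/2+\epsilon}$ yields $|A| \leq |G|^{2/(3+2\epsilon)}$, hence
\[
\frac{|G|^{3/4}}{|A|^{3/8}} \;\geq\; |G|^{\,3/4 - 3/(4(3+2\epsilon))} \;=\; |G|^{(3+3\epsilon)/(6+4\epsilon)} \;=\; |G|^{1/2 + \epsilon'},
\]
where a direct computation shows $\epsilon' = 4\epsilon/(24+16\epsilon)$, as claimed. Combined with \eqref{ourSP}, this completes the proof.
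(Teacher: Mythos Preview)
Your proof is correct and follows essentially the same route as the paper: the same polynomial $F(X,Y,Z)=XY-X^2-Z$, the same non-degeneracy computation yielding $2/(Y-2X)^2$, and the same application of Theorem \ref{11/6ES} to $A\times C\times E$. The only cosmetic differences are that the paper keeps the term $|A|^{1/2}M$ as a separate case (deducing $M\gg |G|/|A|^{1/2}$ directly) rather than absorbing it into $|A|^{1/2}M^{4/3}$, and disposes of the regime $|G|\ll |A|$ by an upfront assumption rather than invoking \eqref{trivial}; neither affects the argument.
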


\begin{proof}
The statement of $\eqref{nontrivial}$ follows from \eqref{ourSP}, and hence it suffices to prove \eqref{ourSP}. Also, we may assume that $|G| \geq c|A|$, where $c$ is a (sufficiently large) fixed constant. This is because the bound \eqref{ourSP} is worse than trivial when $|G| \ll |A|$, and so certainly true.

Write
\[
C:=A+_G B,\,\,\,\, D:=A \cdot_G B.
\]
We will double count solutions to the system of equations
\begin{align}
c&=a+b \label{eq7}
\\ d&=ab, \label{eq8}
\end{align}
such that $(a,b) \in G$ and $(c,d) \in C \times D$. Define $S$ to be the number of solutions to this system. Once again,
\begin{equation} \label{easy3}
S=|G|,
\end{equation}
since each pair $(a,b) \in G$ gives rise to a unique solution.

We can eliminate $b$ from the system above, and deduce that every contribution $(a,b,c,d)$ to $S$ gives rise to a solution $(a,c,d)$ to the equation $F(a,c,d)=0$, where
\[
F(X,Y,Z)=X(Y-X)-Z.
\]
We thus have $S \leq |Z(F) \cap A \times C \times D|$. Note that $F$ is irreducible, since $Z$ appears only as an isolated linear term. We claim that $F$ is non-degenerate. Assuming the claim, it then follows from Theorem \ref{11/6ES} and \eqref{easy3} that
\begin{equation} \label{twoterms}
|G| \ll |A|^{1/2}|C|^{2/3}|D|^{2/3}+|A|+|C||A|^{1/2}+|D||A|^{1/2}.
\end{equation}
If the first term from the right hand side of \eqref{twoterms} is dominant then 
\[
\max\{|C|,|D|\} \gg \frac{|G|^{3/4}}{|A|^{3/8}},
\]
as required. The assumption that $|G| \geq c|A|$ implies that $|A|$ cannot dominate the right-hand side. We are then left with the case whereby one of the last two terms of \eqref{twoterms} dominates. But then
\[
\max \{ |C|, |D| \} \gg \frac{|G|}{|A|^{1/2}} \gg \frac{|G|^{3/4}}{|A|^{3/8}},
\]
where the last inequality follows (with room to spare) from the assumption that $|G| \geq c|A|$.

It remains to check that $F$ is non-degenerate. Define $f(X,Y)=X(Y-X)$. As in the proof of Theorem \ref{thm:main}, we need to show that
\[
\frac{\partial^2\left(\log|f_X/f_Y|\right)}{\partial X\partial Y}
\]
is not identically zero on any open set. It can be calculated that
\[
\frac{\partial^2\left(\log|f_X/f_Y|\right)}{\partial X\partial Y}=\frac{2}{(Y-2X)^2}.
\]
This is never zero, and therefore $F$ is indeed non-degenerate.
\end{proof}

\section*{Acknowledgements}

The author was supported by the Austrian Science Fund FWF Project P 30405-N32. I am very grateful to Sophie Stevens for carefully reading an earlier draft and giving several suggestions which have improved the paper. I would like to thank Audie Warren for helpful conversations which helped towards proving Theorem \ref{thm:SP}. I am also grateful to Mehdi Makhul for helpful discussions.

\end{document}